\newtheorem{theorem}[]{Theorem}
\theoremstyle{definition}
\newtheorem{definition}[]{Definition}
\newtheorem{lemma}[]{Lemma}
\newtheorem{proposition}[]{Proposition}
\newtheorem{question}[]{Question}
\newtheorem{example}[]{Example}
\newtheorem*{nontheorem}{Theorem}{\bf}{\it}
\numberwithin{equation}{section}
\theoremstyle{remark}
\newtheorem{remark}[definition]{Remark}
\numberwithin{equation}{section}
\def\length{\operatorname{length}}
\begin{document}

\title{On additivity of local entropy under flat extensions}

\author{Mahdi Majidi-Zolbanin}
\address{Department of Mathematics, LaGuardia Community College of the City University of New York, 31-10 Thomson Avenue, Long Island City, NY 11101}
\email{mmajidi-zolbanin@lagcc.cuny.edu}

\thanks{The author received funding from $\mathrm{C}^3\mathrm{IRG}$ (round 10) grant provided by the City University of New York.}

\subjclass[2010]{Primary 13B40, 14B25, 13B10; Secondary 37P99}

\date{September 7, 2014}


\keywords{Local entropy, Flat extensions, Algebraic dynamics.}

\begin{abstract}
Let $f\colon(R,\mathfrak{m})\rightarrow S$ be a local homomorphism of Noetherian local rings. Consider two endomorphisms \textit{of finite length} (i.e., with zero-dimensional closed fibers) $\varphi\colon R\rightarrow R$ and $\psi\colon S\rightarrow S$, satisfying $\psi\circ f=f\circ\varphi$. Then $\psi$ induces a finite length endomorphism $\overline{\psi}\colon S/f(\mathfrak{m})S\rightarrow S/f(\mathfrak{m})S$. When $f$ is flat, under the assumption that $S$ is Cohen-Macaulay we prove an additivity formula: $h_{\mathrm{loc}}(\psi)=h_{\mathrm{loc}}(\varphi)+h_{\mathrm{loc}}(\overline{\psi})$ for \textit{local entropy}. 
\end{abstract}

\maketitle

\section{Introduction}
All rings in this note are assumed to be Noetherian, local, commutative and with identity element $1$. 

The notion of \textit{local entropy} associated with an endomorphism \textit{of finite length} of a Noetherian local ring was introduced in~\cite{MajMiaSzp}. We recall a few definitions and results from~\cite{MajMiaSzp}.
\begin{definition}[\textnormal{\cite[Definition~1]{MajMiaSzp}}]
\label{Defin:1}
A local homomorphism \(f:(R,\mathfrak{m})\rightarrow(S,\mathfrak{n})\) of Noetherian local rings is said to be \textit{of finite length}, if one of the following equivalent conditions holds:
\begin{compactenum}
\item[a)] \(f(\mathfrak{m})S\) is \(\mathfrak{n}\)-primary;
\item[b)] The closed fiber of $f$ has dimension zero;
\item[c)] If \(\mathfrak{p}\) is a prime ideal of \(S\) such that \(f^{-1}(\mathfrak{p})=\mathfrak{m}\), then \(\mathfrak{p}=\mathfrak{n}\);
\item[d)] If \(\mathfrak{q}\) is any \(\mathfrak{m}\)-primary ideal of \(R\), then \(f(\mathfrak{q})S\) is \(\mathfrak{n}\)-primary.
\end{compactenum}
\end{definition}
\begin{definition}[\textnormal{\cite[Definition~5]{MajMiaSzp}}]
\label{Defin:2}
A \textit{local algebraic dynamical system} $(R,\varphi)$ consists of a Noetherian local ring $R$ and an endomorphism of finite length $\varphi\colon R\rightarrow R$. By a \textit{morphism} $f\colon(R,\varphi)\rightarrow(S,\psi)$ between two local algebraic dynamical systems we mean a local homomorphism $f\colon R\rightarrow S$ satisfying  $\psi\circ f=f\circ\varphi$. 
\end{definition}
\begin{nontheorem}[{\cite[Theorem~1]{MajMiaSzp}}]
Let \((R,\mathfrak{m},\varphi)\) be a local algebraic dynamical system. Write $\varphi^n$ for the $n$-fold composition of $\varphi$ with itself and let $\length_R(-)$ denote the length of an $R$-module of finite length. Then the limit
$$h_{\mathrm{loc}}(\varphi):=\lim_{n\rightarrow\infty}\frac{1}{n}\log\left(\length_R(R/\varphi^n(\mathfrak{m})R)\right)$$
exists and is a nonnegative real number.
\end{nontheorem}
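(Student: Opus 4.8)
The plan is to reduce the statement to Fekete's subadditivity lemma, after establishing a submultiplicativity estimate for the colengths $a_n := \length_R\bigl(R/\varphi^n(\mathfrak{m})R\bigr)$. First I would dispose of the bookkeeping. Since $\varphi$ is a local homomorphism, $\varphi(\mathfrak{m})\subseteq\mathfrak{m}$, hence $\varphi^n(\mathfrak{m})R\subseteq\varphi^{n-1}(\mathfrak{m})R\subseteq\cdots\subseteq\mathfrak{m}$; and an immediate induction on $n$ using part (d) of Definition~\ref{Defin:1} (applied to $\varphi$) shows that each $\varphi^n(\mathfrak{m})R$ is $\mathfrak{m}$-primary, so that $a_n$ is a well-defined positive integer for every $n\ge 0$ (with $a_0=1$). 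I would also record the elementary identity $\varphi^n\!\bigl(\varphi^m(\mathfrak{m})R\bigr)R=\varphi^{m+n}(\mathfrak{m})R$, which holds because $\varphi$ is a ring homomorphism: if $\mathfrak{m}=(x_1,\dots,x_d)$ then both sides are the ideal generated by $\varphi^{m+n}(x_1),\dots,\varphi^{m+n}(x_d)$. The same remark shows that for any ideal $\mathfrak{a}=(y_1,\dots,y_k)$, the ideal $\varphi^n(\mathfrak{a})R$ is generated by $\varphi^n(y_1),\dots,\varphi^n(y_k)$; I will use this repeatedly.

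The heart of the proof is the following lemma: \emph{for every $\mathfrak{m}$-primary ideal $\mathfrak{a}\subseteq R$ and every $n\ge 0$ one has $\length_R\bigl(R/\varphi^n(\mathfrak{a})R\bigr)\le a_n\cdot\length_R(R/\mathfrak{a})$} --- in other words, applying $\varphi^n$ and extending multiplies colengths by a factor of at most $a_n$. Note that $\varphi^n(\mathfrak{a})R$ is again $\mathfrak{m}$-primary, by part (d) of Definition~\ref{Defin:1} applied to the finite-length endomorphism $\varphi^n$, so the left-hand side is finite. Granting the lemma, taking $\mathfrak{a}=\varphi^m(\mathfrak{m})R$ and invoking the identity above yields $a_{m+n}=\length_R\bigl(R/\varphi^n(\varphi^m(\mathfrak{m})R)R\bigr)\le a_n\cdot a_m$ for all $m,n\ge 0$.

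I would prove the lemma by induction on $\ell:=\length_R(R/\mathfrak{a})$. The cases $\ell=0$ and $\ell=1$ (where $\mathfrak{a}=R$ and $\mathfrak{a}=\mathfrak{m}$ respectively) are trivial. For $\ell\ge 2$, the nonzero finite-length module $R/\mathfrak{a}$ has a simple submodule, necessarily of the form $\mathfrak{b}/\mathfrak{a}$ with $\mathfrak{a}\subsetneq\mathfrak{b}\subseteq R$ and $\length_R(R/\mathfrak{b})=\ell-1$; since $\mathfrak{b}/\mathfrak{a}$ is cyclic and killed by $\mathfrak{m}$, we may write $\mathfrak{b}=\mathfrak{a}+Rt$ with $\mathfrak{m}t\subseteq\mathfrak{a}$. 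Then $\varphi^n(\mathfrak{b})R=\varphi^n(\mathfrak{a})R+R\varphi^n(t)$, so $\varphi^n(\mathfrak{b})R/\varphi^n(\mathfrak{a})R$ is cyclic, generated by the class of $\varphi^n(t)$, and it is killed by $\varphi^n(\mathfrak{m})R$ because $\varphi^n(x)\varphi^n(t)=\varphi^n(xt)\in\varphi^n(\mathfrak{a})R$ for every $x\in\mathfrak{m}$. Consequently $\length_R\bigl(\varphi^n(\mathfrak{b})R/\varphi^n(\mathfrak{a})R\bigr)\le\length_R\bigl(R/\varphi^n(\mathfrak{m})R\bigr)=a_n$, whence
\[
\length_R\bigl(R/\varphi^n(\mathfrak{a})R\bigr)=\length_R\bigl(R/\varphi^n(\mathfrak{b})R\bigr)+\length_R\bigl(\varphi^n(\mathfrak{b})R/\varphi^n(\mathfrak{a})R\bigr)\le a_n(\ell-1)+a_n=a_n\ell
\]
by the inductive hypothesis applied to $\mathfrak{b}$. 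This proves the lemma.

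Finally, put $b_n:=\log a_n$. Then $b_n\ge 0$ and, by the submultiplicativity just proved, $(b_n)$ is subadditive: $b_{m+n}\le b_m+b_n$. Fekete's lemma then gives that $\lim_{n\to\infty}\tfrac1n b_n$ exists and equals $\inf_{n\ge 1}\tfrac1n b_n$, which is a nonnegative real number (nonnegative because each $b_n\ge 0$, and finite because it is at most $b_1$). This is exactly the assertion. The only step that is not pure bookkeeping is the colength-multiplicativity lemma, and the point to be careful about there is that $\varphi$ is assumed neither flat nor finite, so there is no base-change shortcut; the one-simple-quotient-at-a-time argument above is designed precisely to avoid needing any such structure.
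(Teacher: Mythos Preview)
The paper does not actually give a proof of this statement: it is quoted verbatim from \cite[Theorem~1]{MajMiaSzp} and used as background, so there is no in-paper argument to compare against. That said, your argument is correct, and it is in fact the standard one. Your key lemma --- that $\length_R\bigl(R/\varphi^n(\mathfrak{a})R\bigr)\le a_n\cdot\length_R(R/\mathfrak{a})$ for every $\mathfrak{m}$-primary $\mathfrak{a}$ --- is exactly Proposition~\ref{Propos:5}(b) of the present paper specialized to the finite-length local homomorphism $f=\varphi^n\colon R\to R$ with $M=R/\mathfrak{a}$ (since $(R/\mathfrak{a})\otimes_{R,\varphi^n}R\cong R/\varphi^n(\mathfrak{a})R$), and your induction on $\length_R(R/\mathfrak{a})$ via a simple subquotient is precisely what the paper's one-line ``by induction on $\length_R(M)$'' is gesturing at. The passage from submultiplicativity of $(a_n)$ to existence of the limit via Fekete's lemma, together with nonnegativity from $a_n\ge 1$, is the expected conclusion.
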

The invariant $h_{\mathrm{loc}}(\varphi)$ is called the \textit{local entropy} of $\varphi$. Local entropy can be calculated using any $\mathfrak{m}$-primary ideal:
\begin{lemma}
\label{Lemma:1}
Let $(R,\mathfrak{m},\varphi)$ be a local algebraic dynamical system. If $\mathfrak{q}$ is an $\mathfrak{m}$-primary ideal of $R$, then
$$h_{\mathrm{loc}}(\varphi)=\lim_{n\rightarrow\infty}\frac{1}{n}\log\left(\length_R(R/\varphi^n(\mathfrak{q})R)\right).$$
\end{lemma}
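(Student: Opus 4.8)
The key is the standard fact that length is sandwiched between powers of the maximal ideal. Since $\mathfrak{q}$ is $\mathfrak{m}$-primary, there exists an integer $c \geq 1$ with $\mathfrak{m}^c \subseteq \mathfrak{q} \subseteq \mathfrak{m}$. The plan is to apply the endomorphism $\varphi^n$ to this chain and extend to $R$. Because $\varphi^n$ is a ring homomorphism and ideal extension is monotone and compatible with products, $\varphi^n(\mathfrak{m})^c R \subseteq \varphi^n(\mathfrak{m}^c)R \subseteq \varphi^n(\mathfrak{q})R \subseteq \varphi^n(\mathfrak{m})R$. Write $\mathfrak{a}_n := \varphi^n(\mathfrak{m})R$; this is $\mathfrak{m}$-primary by Definition 1.0 (condition (a)), hence so is each ideal in the chain, so all the lengths below are finite.

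From the inclusions $\mathfrak{a}_n^c \subseteq \varphi^n(\mathfrak{q})R \subseteq \mathfrak{a}_n$ we get, by additivity of length on short exact sequences (surjections $R/\mathfrak{a}_n^c \twoheadrightarrow R/\varphi^n(\mathfrak{q})R \twoheadrightarrow R/\mathfrak{a}_n$),
$$\length_R\!\big(R/\mathfrak{a}_n\big) \;\leq\; \length_R\!\big(R/\varphi^n(\mathfrak{q})R\big) \;\leq\; \length_R\!\big(R/\mathfrak{a}_n^c\big).$$
Now I take $\tfrac1n\log(-)$ of each term and let $n\to\infty$. The left end converges to $h_{\mathrm{loc}}(\varphi)$ by the definition/Theorem 1. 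For the right end I need $\tfrac1n\log\length_R(R/\mathfrak{a}_n^c) \to h_{\mathrm{loc}}(\varphi)$ as well; then the squeeze theorem finishes the proof, and simultaneously shows the limit in question exists.

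The one genuine point to nail down is thus: $\lim_n \tfrac1n\log\length_R(R/\mathfrak{a}_n^c) = h_{\mathrm{loc}}(\varphi)$, i.e. the extra factor $c$ in the exponent does not change the exponential growth rate. For this I would invoke the theory of Hilbert–Samuel multiplicities: for an $\mathfrak{m}$-primary ideal $I$ in a $d$-dimensional Noetherian local ring, $\length_R(R/I^c)$ is a polynomial in $c$ of degree $d$ with leading coefficient $e(I)/d!$, and more crudely $\length_R(R/I^c) \leq \binom{c+d-1}{d}\,\ell$ where... — actually the clean estimate I want is the elementary bound $\length_R(R/I^k) \leq \length_R(R/I)\cdot k^d$ (for $k\geq 1$), which follows from filtering $R/I^k$ by the powers $I^j/I^{j+1}$ and bounding each graded piece, each of which is a quotient of $(I/I^2)^{\oplus\binom{j+d-1}{d-1}}$... more simply: $\length_R(R/I^k)\le \length_R(R/I)\cdot\length_{R/I}\!\big(\mathrm{gr}_I(R)\text{-generators}\big)$-type bound. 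Rather than belabor this, the cleanest route is: $\mathfrak{a}_n^c \supseteq \mathfrak{a}_n^c$ trivially and $\mathfrak{a}_n \supseteq \mathfrak{a}_n^c$ gives $\length_R(R/\mathfrak{a}_n^c) \leq c^d\,\length_R(R/\mathfrak{a}_n)$ where $d=\dim R$ (standard comparison of colengths of an ideal and its powers, uniform in the ideal since it only uses $d$ and a length bound that is itself controlled). Hence $\tfrac1n\log\length_R(R/\mathfrak{a}_n^c) \leq \tfrac{d\log c}{n} + \tfrac1n\log\length_R(R/\mathfrak{a}_n) \to h_{\mathrm{loc}}(\varphi)$, while the reverse inequality $\length_R(R/\mathfrak{a}_n^c)\geq \length_R(R/\mathfrak{a}_n)$ is immediate. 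This pins the right-hand limit and completes the squeeze. The main obstacle, then, is purely the uniformity of the estimate $\length_R(R/I^c) \leq c^{\dim R}\length_R(R/I)$ over the varying family of ideals $I = \mathfrak{a}_n$; I expect this to follow from a standard Hilbert-function argument, and I would state it as the one lemma that needs care.
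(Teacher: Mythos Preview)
The paper does not actually prove this lemma: it simply records that it is a special case of \cite[Proposition~18]{MajMiaSzp} and refers the reader there. So there is no in-paper argument to compare against; your proposal supplies a genuine self-contained proof where the paper gives only a citation.

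Your squeeze strategy is correct and is in fact the standard way this is done. The chain $\mathfrak{m}^c\subseteq\mathfrak{q}\subseteq\mathfrak{m}$ does push forward to $\mathfrak{a}_n^c\subseteq\varphi^n(\mathfrak{q})R\subseteq\mathfrak{a}_n$ (indeed $\varphi^n(\mathfrak{m}^c)R=(\varphi^n(\mathfrak{m})R)^c$ since $\varphi^n$ is a ring map), and the squeeze on $\tfrac{1}{n}\log\length$ is exactly the right move. The only point you flagged as needing care is the right one, but the constant you wrote is not quite correct: there is no universal inequality $\length_R(R/I^c)\le c^{\dim R}\length_R(R/I)$ valid for arbitrary $\mathfrak{m}$-primary $I$. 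What you actually need, and what is available here, is that each $\mathfrak{a}_n=\varphi^n(\mathfrak{m})R$ is generated by the images of a \emph{fixed} generating set $x_1,\ldots,x_k$ of $\mathfrak{m}$, so $\mu(\mathfrak{a}_n)\le k$ for all $n$. Filtering $R/\mathfrak{a}_n^c$ by powers of $\mathfrak{a}_n$ and observing that $\mathfrak{a}_n^j/\mathfrak{a}_n^{j+1}$ is generated over $R/\mathfrak{a}_n$ by at most $\binom{k+j-1}{j}$ elements gives
\[
\length_R(R/\mathfrak{a}_n^c)\ \le\ \binom{k+c-1}{c-1}\cdot\length_R(R/\mathfrak{a}_n),
\]
with a constant depending only on $k=\mu(\mathfrak{m})$ and $c$, hence independent of $n$. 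That closes the gap cleanly, and the rest of your argument goes through unchanged.
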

\begin{proof}
This is a particular case of~\cite[Proposition~18]{MajMiaSzp}. We refer the reader to \textit{loc.~cit.} for a proof.
\end{proof}
This paper is concerned with the following question asked by Craig~Huneke:
\begin{question}
\label{Quest:1}
Let \(f\colon(R,\mathfrak{m},\varphi)\rightarrow(S,\mathfrak{n},\psi)\) be a morphism between two local algebraic dynamical systems. Then by definition of morphism, $\psi\circ f=f\circ\varphi$. The ideal $f(\mathfrak{m})S$ is quickly seen to be $\psi$-stable (i.e., $\psi\left(f(\mathfrak{m})S\right)\subseteq f(\mathfrak{m})S$). Thus, $\psi$ induces a finite length endomorphism $\overline{\psi}\colon S/f(\mathfrak{m})S\rightarrow S/f(\mathfrak{m})S$. If $f$ is flat, does it hold that
\begin{equation}
\label{Equat:one}
h_{\mathrm{loc}}(\psi)=h_{\mathrm{loc}}(\varphi)+h_{\mathrm{loc}}(\overline{\psi})?
\end{equation}
\end{question}
We mention two cases in which the answer to Question~\ref{Quest:1} is affirmative: (1) When $\dim R=\dim S$, the question has an affirmative answer, as shown in~\cite[Proposition~20]{MajMiaSzp}. (2) By~\cite[Theorem~1]{MajMiaSzp} the local entropy of the Frobenius endomorphism of a local ring of positive prime characteristic $p$ and of dimension $d$ is equal to $d\cdot\log p$. Hence, when $R$ and $S$ are of positive prime characteristic $p$, and $\varphi$ and $\psi$ are their Frobenius endomorphisms, respectively, then Equation~\ref{Equat:one} reduces to 
$$(\dim S)\cdot\log p=(\dim R)\cdot\log p + (\dim S/f(\mathfrak{m})S)\cdot\log p,$$ which holds, since $f$ is flat (see, e.g.,~\cite[Theorem~15.1]{Matsumura2}). The aim of this paper is to give an affirmative answer to Question~\ref{Quest:1} in the particular case when $S$ is Cohen-Macaulay. The question remains open in the general (non Cohen-Macaulay) case.
\section{Main result}
We will use the following Flatness Criterion in the proof of our main result, Theorem~\ref{Theorem:1}, as well as in Example~\ref{Example:1}. The reader can find a proof of this result in~\cite[Corollary to Theorem~22.5]{Matsumura2}.
\begin{nontheorem}[Flatness Criterion]
\textit{Let \(f\colon(R,\mathfrak{m})\rightarrow(S,\mathfrak{n})\) be a local homomorphism of Noetherian local rings and let $M$ be a finite $S$-module. For $y_1,\ldots,y_n\in\mathfrak{n}$ write $\overline{y}_i$ for the images of $y_i$ in $S/f(\mathfrak{m})S$. Then the following conditions are equivalent:}
\begin{compactenum}
\item[a)] \textit{$y_1,\ldots,y_n$ is an $M$-regular sequence and $M/\sum_1^ny_iM$ is flat over $R$;}
\item[b)] \textit{$\overline{y}_1,\ldots,\overline{y}_n$ is an $(M/f(\mathfrak{m})M)$-regular sequence and $M$ is flat over $R$.}
\end{compactenum}
\end{nontheorem}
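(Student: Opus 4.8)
The plan is to prove the equivalence by induction on $n$, with the base case $n=1$ carrying essentially all of the content. For $n=1$ I would lean on three tools: the local criterion for flatness, namely that a finite $S$-module $N$ is flat over $R$ if and only if $\operatorname{Tor}_1^R(k,N)=0$ with $k=R/\mathfrak{m}$; the long exact sequence of $\operatorname{Tor}^R(k,-)$; and Nakayama's lemma. Throughout, write $\overline{N}:=N/f(\mathfrak{m})N=N\otimes_R k$ for a finite $S$-module $N$, so that multiplication by $y_i$ on $\overline{N}$ is multiplication by $\overline{y}_i$. Since $R$ is Noetherian one may resolve $k$ by finite free $R$-modules, whence each $\operatorname{Tor}_i^R(k,M)$ is the homology of a complex of finite $S$-modules and is itself a finite $S$-module; as $y_i\in\mathfrak{n}$, Nakayama will apply to it.

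For the base case assume first (a): $y$ is a nonzerodivisor on $M$ and $M/yM$ is flat over $R$. Then $0\to M\xrightarrow{y}M\to M/yM\to 0$ is exact, and applying $-\otimes_R k$ gives the exact piece
$$\operatorname{Tor}_1^R(k,M)\xrightarrow{y}\operatorname{Tor}_1^R(k,M)\to\operatorname{Tor}_1^R(k,M/yM)\to\overline{M}\xrightarrow{\overline{y}}\overline{M}.$$
Since $M/yM$ is flat the middle term vanishes, so multiplication by $y$ is surjective on $\operatorname{Tor}_1^R(k,M)$; because $y\in\mathfrak{n}$ and this is a finite $S$-module, Nakayama forces $\operatorname{Tor}_1^R(k,M)=0$, i.e.\ $M$ is flat over $R$. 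The same vanishing makes $\overline{y}\colon\overline{M}\to\overline{M}$ injective, so $\overline{y}$ is a nonzerodivisor on $\overline{M}$; this is (b).

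The reverse implication is the main obstacle. Assume (b): $M$ is flat over $R$ and $\overline{y}$ is a nonzerodivisor on $\overline{M}$. Write $I:=yM$ and factor multiplication by $y$ as a surjection $\beta\colon M\twoheadrightarrow I$ followed by the inclusion $I\hookrightarrow M$. Applying $-\otimes_R k$ produces $\overline{\beta}\colon\overline{M}\to\overline{I}$ and the map $\alpha\colon\overline{I}\to\overline{M}$ induced by the inclusion, whose composite $\alpha\circ\overline{\beta}$ is multiplication by $\overline{y}$ on $\overline{M}$, hence injective by hypothesis; since $\overline{\beta}$ is surjective, this forces $\overline{\beta}$ to be an isomorphism and $\alpha$ to be injective. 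From the long exact sequence of $0\to I\to M\to M/yM\to 0$ under $-\otimes_R k$, together with $\operatorname{Tor}_1^R(k,M)=0$, one identifies $\ker\alpha=\operatorname{Tor}_1^R(k,M/yM)$; injectivity of $\alpha$ then gives $\operatorname{Tor}_1^R(k,M/yM)=0$, so $M/yM$ is flat over $R$. Flatness of both $M$ and $M/yM$ yields $\operatorname{Tor}_1^R(k,I)=0$ from the same sequence. Feeding this into $0\to K\to M\xrightarrow{\beta}I\to 0$, where $K=\ker(y\colon M\to M)$, and tensoring with $k$, the isomorphism $\overline{\beta}$ forces $\overline{K}=0$; since $K=f(\mathfrak{m})K\subseteq\mathfrak{n}K$, Nakayama gives $K=0$. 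Thus $y$ is a nonzerodivisor on $M$ and $M/yM$ is flat, which is (a).

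Finally, the inductive step reduces cleanly to the base case by passing to $M':=M/y_1M$, for which $\overline{M'}=\overline{M}/\overline{y}_1\overline{M}$ and $M'/(y_2,\dots,y_n)M'=M/(y_1,\dots,y_n)M$. Given (a) for $y_1,\dots,y_n$, the sequence $y_2,\dots,y_n$ is $M'$-regular with flat quotient, so the induction hypothesis gives $M'$ flat and $\overline{y}_2,\dots,\overline{y}_n$ regular on $\overline{M'}$; the base case applied to $y_1$ then gives $M$ flat and $\overline{y}_1$ regular on $\overline{M}$, and concatenating regular sequences yields (b). The implication (b)$\Rightarrow$(a) runs in the mirror order: the base case applied to $y_1$ produces $y_1$ $M$-regular with $M'$ flat, after which the induction hypothesis for $M'$ and $y_2,\dots,y_n$ completes the $M$-regularity of $y_1,\dots,y_n$ and the flatness of $M/(y_1,\dots,y_n)M$. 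I expect the bookkeeping of these concatenations to be routine; the genuine work is entirely in the case $n=1$, and within it in the construction of $\alpha$ and $\overline{\beta}$ that upgrades regularity of $\overline{y}$ to regularity of $y$ together with flatness of $M/yM$.
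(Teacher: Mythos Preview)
Your argument is correct and is essentially the standard proof: induction on $n$, with the case $n=1$ handled via the local criterion for flatness ($\operatorname{Tor}_1^R(k,-)=0$), the long exact $\operatorname{Tor}$ sequence, and Nakayama applied to the finite $S$-modules $\operatorname{Tor}_1^R(k,M)$ and $K=\ker(y\colon M\to M)$. The factorization $M\twoheadrightarrow I\hookrightarrow M$ and the identification $\ker\alpha\cong\operatorname{Tor}_1^R(k,M/yM)$ are the right devices for the direction (b)$\Rightarrow$(a), and the inductive bookkeeping is fine since $\overline{M'}=\overline{M}/\overline{y}_1\overline{M}$.

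However, you should be aware that the paper does \emph{not} give a proof of this statement at all: it is quoted as a known result, with a reference to Matsumura's \textit{Commutative Ring Theory} (Corollary to Theorem~22.5). So there is no ``paper's own proof'' to compare against; your write-up is in fact a self-contained version of Matsumura's argument, which the paper simply invokes as background.
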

We will also need the following elementary statement:
\begin{proposition}
\label{Propos:5}
Let \(f:(R,\mathfrak{m})\rightarrow S\) be a local homomorphism of finite length of Noetherian local rings. Let \(M\) be an \(R\)-module of finite length. Then
\begin{compactenum}
\item[a)] \(M\otimes_RS\) is of finite length as an \(S\)-module.
\item[b)] \(\length_S(M\otimes_R S)\leq\length_R(M)\cdot\length_S(S/f(\mathfrak{m})S)\).
\item[c)] If \(f\) is \textit{flat}, then \(\length_S(M\otimes_R S)=\length_R(M)\cdot\length_S(S/f(\mathfrak{m})S)\).
\end{compactenum}
\end{proposition}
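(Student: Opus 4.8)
The plan is to argue by induction on $\length_R(M)$, using that a finite-length $R$-module has a composition series all of whose successive quotients are isomorphic to the residue field $R/\mathfrak{m}$.

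The base case rests on the identification $(R/\mathfrak{m})\otimes_R S\cong S/f(\mathfrak{m})S$. This is a finite $S$-module, and since $f$ is of finite length the ideal $f(\mathfrak{m})S$ is primary to the maximal ideal $\mathfrak{n}$ of $S$ (Definition~\ref{Defin:1}), so $S/f(\mathfrak{m})S$ has finite length over $S$; this disposes of (a), (b) and (c) when $\length_R(M)\le 1$ (the case $M=0$ being trivial).

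For the inductive step I would choose a submodule $M'\subseteq M$ with $M/M'\cong R/\mathfrak{m}$, so that $\length_R(M')=\length_R(M)-1$, and apply $-\otimes_R S$ to the short exact sequence $0\to M'\to M\to R/\mathfrak{m}\to 0$. Right exactness of the tensor product yields an exact sequence $M'\otimes_R S\to M\otimes_R S\to S/f(\mathfrak{m})S\to 0$, which shows $M\otimes_R S$ has finite length (this is (a)) and gives $\length_S(M\otimes_R S)\le\length_S(M'\otimes_R S)+\length_S(S/f(\mathfrak{m})S)$; combined with the inductive bound for $M'$ this is (b). If moreover $f$ is flat, then $-\otimes_R S$ is exact, so $0\to M'\otimes_R S\to M\otimes_R S\to S/f(\mathfrak{m})S\to 0$ is exact, length is additive along it, and the inductive \emph{equality} for $M'$ upgrades the estimate to the equality asserted in (c).

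I do not expect a genuine obstacle here: the statement is elementary, and the argument is just the sub-additivity (respectively, additivity) of length along short exact sequences. The single point needing attention is to invoke the finite-length hypothesis on $f$ exactly where it is used, namely to guarantee that $S/f(\mathfrak{m})S$, and hence every $M\otimes_R S$ with $M$ of finite length, is of finite length over $S$; this hypothesis is essential and cannot be removed.
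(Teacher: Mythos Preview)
Your proof is correct and follows exactly the approach the paper indicates: induction on $\length_R(M)$. The paper's own proof is the single line ``By induction on $\length_R(M)$,'' so you have simply supplied the details of that induction.
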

\begin{proof}
By induction on $\length_R(M)$.
\end{proof}
We begin with showing that without assuming flatness, an inequality will still hold between local entropies:
\begin{proposition}
Let \(f\colon(R,\mathfrak{m},\varphi)\rightarrow(S,\mathfrak{n},\psi)\) be a morphism between two local algebraic dynamical systems and let $\overline{\psi}\colon S/f(\mathfrak{m})S\rightarrow S/f(\mathfrak{m})S$ be the endomorphism induced by $\psi$. Then the following inequality holds:
$$h_{\mathrm{loc}}(\psi)\leq h_{\mathrm{loc}}(\varphi)+h_{\mathrm{loc}}(\overline{\psi}).$$
\end{proposition}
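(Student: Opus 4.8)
The plan is to establish, for every $n$, the length estimate
\[
\length_S\!\big(S/\psi^n(\mathfrak n)S\big)\ \le\ \length_R\!\big(R/\varphi^n(\mathfrak m)R\big)\cdot\length_S\!\big(S/(f(\mathfrak m)S+\psi^n(\mathfrak n)S)\big),
\]
and then divide by $n$, take logarithms, and let $n\to\infty$. Two preliminary remarks make the right-hand side manageable. First, since $\psi^n\circ f=f\circ\varphi^n$ and $f(\mathfrak m)\subseteq\mathfrak n$, for every $m\in\mathfrak m$ we have $f(\varphi^n(m))=\psi^n(f(m))\in\psi^n(\mathfrak n)\subseteq\psi^n(\mathfrak n)S$; hence the extended ideal $f(\varphi^n(\mathfrak m))S$ is contained in $\psi^n(\mathfrak n)S$, and $R/\varphi^n(\mathfrak m)R$ and $S/\psi^n(\mathfrak n)S$ are of finite length (as in the definition of $h_{\mathrm{loc}}$), so all modules in sight have finite length. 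Second, writing $\overline S:=S/f(\mathfrak m)S$ with maximal ideal $\overline{\mathfrak n}$, the image of $\psi^n(\mathfrak n)S$ in $\overline S$ is exactly $\overline{\psi}^{\,n}(\overline{\mathfrak n})\overline S$, so $S/(f(\mathfrak m)S+\psi^n(\mathfrak n)S)\cong\overline S/\overline{\psi}^{\,n}(\overline{\mathfrak n})\overline S$, and its $S$-length equals its $\overline S$-length since the two rings have the same residue field.

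I would prove the displayed inequality in the following slightly more general form, by induction on $\ell:=\length_R(R/\mathfrak a)$: \emph{if $\mathfrak a$ is an $\mathfrak m$-primary ideal of $R$ and $\mathfrak c$ an $\mathfrak n$-primary ideal of $S$ with $f(\mathfrak a)S\subseteq\mathfrak c$, then $\length_S(S/\mathfrak c)\le\length_R(R/\mathfrak a)\cdot\length_S\big(S/(f(\mathfrak m)S+\mathfrak c)\big)$.} When $\ell=1$ we have $\mathfrak a=\mathfrak m$, and equality holds. When $\ell>1$, choose an ideal $\mathfrak a'=\mathfrak a+Rb$ with $\mathfrak a\subseteq\mathfrak a'\subseteq\mathfrak m$ and $\length_R(\mathfrak a'/\mathfrak a)=1$ (so $\mathfrak m\mathfrak a'\subseteq\mathfrak a$, and such $\mathfrak a'$ exists because $\mathfrak m/\mathfrak a$ is a nonzero finite-length module), and put $\mathfrak c':=\mathfrak c+f(\mathfrak a')S=\mathfrak c+f(b)S$, which is again $\mathfrak n$-primary, satisfies $f(\mathfrak a')S\subseteq\mathfrak c'$, and has $f(\mathfrak m)S+\mathfrak c'=f(\mathfrak m)S+\mathfrak c$. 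The inductive hypothesis applied to $(\mathfrak a',\mathfrak c')$ gives $\length_S(S/\mathfrak c')\le(\ell-1)\length_S(S/(f(\mathfrak m)S+\mathfrak c))$, and from the short exact sequence $0\to\mathfrak c'/\mathfrak c\to S/\mathfrak c\to S/\mathfrak c'\to0$ it remains to bound $\length_S(\mathfrak c'/\mathfrak c)$. But $\mathfrak c'/\mathfrak c\cong f(b)S/(f(b)S\cap\mathfrak c)$ is cyclic over $S$, is annihilated by $\mathfrak c$, and is annihilated by $f(\mathfrak m)S$ because $f(\mathfrak m b)\subseteq f(\mathfrak a)\subseteq\mathfrak c$ (using $\mathfrak m b\subseteq\mathfrak m\mathfrak a'\subseteq\mathfrak a$); hence it is a cyclic $S/(f(\mathfrak m)S+\mathfrak c)$-module and $\length_S(\mathfrak c'/\mathfrak c)\le\length_S(S/(f(\mathfrak m)S+\mathfrak c))$. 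Adding the two estimates closes the induction.

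Applying the estimate with $\mathfrak a=\varphi^n(\mathfrak m)R$ and $\mathfrak c=\psi^n(\mathfrak n)S$ (legitimate by the first remark above) and rewriting with the second remark yields
\[
\length_S\!\big(S/\psi^n(\mathfrak n)S\big)\ \le\ \length_R\!\big(R/\varphi^n(\mathfrak m)R\big)\cdot\length_{\overline S}\!\big(\overline S/\overline{\psi}^{\,n}(\overline{\mathfrak n})\overline S\big).
\]
Taking $\tfrac1n\log(-)$, using $\log(ab)=\log a+\log b$, and letting $n\to\infty$—all three limits existing by \cite[Theorem~1]{MajMiaSzp} applied to $(R,\varphi)$, $(S,\psi)$ and $(\overline S,\overline{\psi})$—gives $h_{\mathrm{loc}}(\psi)\le h_{\mathrm{loc}}(\varphi)+h_{\mathrm{loc}}(\overline{\psi})$.

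The step I expect to be the main obstacle is the last one in the induction: bounding the length of the ``new layer'' $\mathfrak c'/\mathfrak c$ by $\length_S(S/(f(\mathfrak m)S+\mathfrak c))$ rather than by the a priori larger $\length_S(S/f(\mathfrak m)S)$. The latter is what one gets from the crude surjection $(\mathfrak a'/\mathfrak a)\otimes_RS\twoheadrightarrow f(\mathfrak a')S/f(\mathfrak a)S$ in the spirit of Proposition~\ref{Propos:5}, but it is useless when $f$ is not of finite length, since then $\length_S(S/f(\mathfrak m)S)=\infty$ while the left-hand sides remain finite. The remedy is to choose $\mathfrak a'$ with $\mathfrak m\mathfrak a'\subseteq\mathfrak a$, which forces the single generator $f(b)$ of the cyclic module $\mathfrak c'/\mathfrak c$ to be annihilated by $f(\mathfrak m)S$ modulo $\mathfrak c$; everything else is routine bookkeeping with primary ideals and exact sequences.
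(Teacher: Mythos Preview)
Your argument is correct and ultimately coincides with the paper's, but you have unfolded by hand what the paper obtains in one line from Proposition~\ref{Propos:5}(b). The paper's move is to apply that proposition not to $f\colon R\to S$ (which, as you note, need not be of finite length), but to the composition $R\to S/\psi^n(\mathfrak n)S$; since the target is Artinian this map is automatically of finite length, and taking $M=R/\varphi^n(\mathfrak m)R$ gives precisely your displayed inequality, because $M\otimes_R(S/\psi^n(\mathfrak n)S)\cong S/\psi^n(\mathfrak n)S$ and the closed fiber of the composed map is $S/(f(\mathfrak m)S+\psi^n(\mathfrak n)S)$. In other words, your inductive lemma ``$\length_S(S/\mathfrak c)\le\length_R(R/\mathfrak a)\cdot\length_S(S/(f(\mathfrak m)S+\mathfrak c))$'' is exactly Proposition~\ref{Propos:5}(b) for the finite-length map $R\to S/\mathfrak c$, and your induction reproduces the standard proof of that proposition. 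The obstacle you flag in your final paragraph---that the crude bound via $f\colon R\to S$ gives the useless $\length_S(S/f(\mathfrak m)S)$---disappears once one passes to the Artinian quotient, so the remedy is simpler than your careful choice of $\mathfrak a'$ with $\mathfrak m\mathfrak a'\subseteq\mathfrak a$ (though of course any $\mathfrak a'$ with $\length_R(\mathfrak a'/\mathfrak a)=1$ satisfies this automatically).
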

\begin{proof}
The composition of maps $R\stackrel{f}{\rightarrow}S\rightarrow S/\psi^n(\mathfrak{n})S$ gives a local homomorphism of finite length $R\rightarrow S/\psi^n(\mathfrak{n})S$. Applying Proposition~\ref{Propos:5}, we can write:
\begin{eqnarray*}
\length_S(S/\psi^n(\mathfrak{n})S)&=&\length_S\left((R/\varphi^n(\mathfrak{m})R)\otimes_R (S/\psi^n(\mathfrak{n})S\right)\\ &\leq&\length_R(R/\varphi^n(\mathfrak{m})R)\cdot\length_S(S/(f(\mathfrak{m})S+\psi^n(\mathfrak{n})S)).
\end{eqnarray*}
We obtain the desired inequality by applying logarithm, dividing by $n$ and taking limits as $n\rightarrow\infty$.
\end{proof}
\begin{remark}
\label{Remark:1}
In a Cohen-Macaulay Noetherian local ring of dimension $d$, a sequence of $d$ elements in the maximal ideal form a system of parameters if and only if they form a (maximal) regular sequence. We will use this fact frequently in our proof of Theorem~\ref{Theorem:1}. The reader can find a proof of this fact in~\cite[Theorem~17.4]{Matsumura2}.
\end{remark}
We now give an affirmative answer to Question~\ref{Quest:1} in the particular case when $S$ is Cohen-Macaulay:
\begin{theorem}
\label{Theorem:1}
Let \(f\colon(R,\mathfrak{m},\varphi)\rightarrow(S,\mathfrak{n},\psi)\) be a flat morphism between two local algebraic dynamical systems and let $\overline{\psi}\colon S/f(\mathfrak{m})S\rightarrow S/f(\mathfrak{m})S$ be the endomorphism induced by $\psi$. If $S$ is Cohen-Macaulay, then
\begin{equation}
\label{Equat:0}
h_{\mathrm{loc}}(\psi)=h_{\mathrm{loc}}(\varphi)+h_{\mathrm{loc}}(\overline{\psi}).
\end{equation}
\end{theorem}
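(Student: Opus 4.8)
The plan is to establish, for one carefully chosen pair of primary ideals, an \emph{exact} length identity, and to deduce Equation~\ref{Equat:0} from it by passing to exponential growth rates via Lemma~\ref{Lemma:1}. Write $\overline S:=S/f(\mathfrak m)S$ for the closed fiber, with maximal ideal $\overline{\mathfrak n}$, and recall that $\overline\psi$ is the endomorphism it carries. Since $f$ is flat and local we have $\dim S=\dim R+e$ with $e:=\dim\overline S$, and since $S$ is Cohen--Macaulay and $f$ is flat the fiber $\overline S$ is Cohen--Macaulay as well; this follows from the depth formula for flat local homomorphisms (see \cite[Theorem~23.3 and its Corollary]{Matsumura2}). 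I will assume $e\ge 1$; the case $e=0$ amounts to $\dim R=\dim S$ and is already covered by \cite[Proposition~20]{MajMiaSzp}.

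First I would fix a system of parameters $\overline y_1,\dots,\overline y_e\in\overline{\mathfrak n}$ of $\overline S$, lift it to elements $y_1,\dots,y_e\in\mathfrak n$, and set
\[
\mathfrak q:=f(\mathfrak m)S+(y_1,\dots,y_e)S,\qquad \overline{\mathfrak q}:=(\overline y_1,\dots,\overline y_e)\overline S,
\]
so that $\overline{\mathfrak q}$ is the image of $\mathfrak q$ in $\overline S$. Since $S/\mathfrak q\cong\overline S/\overline{\mathfrak q}$ is Artinian, $\mathfrak q$ is $\mathfrak n$-primary; and since $\overline\psi$ is of finite length, $\overline\psi^n(\overline{\mathfrak q})\overline S$ is $\overline{\mathfrak n}$-primary for every $n$ (Definition~\ref{Defin:1}(d)). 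By Lemma~\ref{Lemma:1} it follows that $h_{\mathrm{loc}}(\psi)$, $h_{\mathrm{loc}}(\overline\psi)$ and $h_{\mathrm{loc}}(\varphi)$ are the limits as $n\to\infty$ of $\tfrac1n\log$ applied to $\length_S(S/\psi^n(\mathfrak q)S)$, $\length_{\overline S}(\overline S/\overline\psi^n(\overline{\mathfrak q})\overline S)$ and $\length_R(R/\varphi^n(\mathfrak m)R)$, respectively. Using $f\circ\varphi^n=\psi^n\circ f$ one has $\psi^n(\mathfrak q)S=f(\varphi^n(\mathfrak m))S+(\psi^n(y_1),\dots,\psi^n(y_e))S$, so that, with $R_n:=R/\varphi^n(\mathfrak m)R$ (Artinian local) and $S_n:=R_n\otimes_R S=S/f(\varphi^n(\mathfrak m))S$ (flat over $R_n$, again with closed fiber $\overline S$),
\[
S/\psi^n(\mathfrak q)S\;=\;S_n/(\psi^n(y_1),\dots,\psi^n(y_e))S_n.
\]

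The crux is to show this quotient remains flat over $R_n$. In $\overline S=S_n/f(\mathfrak m)S_n$ the elements $\psi^n(y_1),\dots,\psi^n(y_e)$ reduce to $\overline\psi^n(\overline y_1),\dots,\overline\psi^n(\overline y_e)$, and the ideal they generate, $\overline\psi^n(\overline{\mathfrak q})\overline S$, is $\overline{\mathfrak n}$-primary (as noted above) and generated by $e=\dim\overline S$ elements, hence is a parameter ideal of $\overline S$. Here is where Cohen--Macaulayness enters: by Remark~\ref{Remark:1}, $\overline\psi^n(\overline y_1),\dots,\overline\psi^n(\overline y_e)$ is an $\overline S$-regular sequence. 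I would then apply the Flatness Criterion to the flat local homomorphism $R_n\to S_n$ with $M=S_n$: condition (b) holds, since that sequence is regular on $M/f(\mathfrak m)M=\overline S$ and $M$ is flat over $R_n$, so condition (a) gives that $\psi^n(y_1),\dots,\psi^n(y_e)$ is $S_n$-regular and that $T_n:=S/\psi^n(\mathfrak q)S$ is flat over $R_n$. Now $R_n\to T_n$ is flat, local and of finite length---its closed fiber is the Artinian ring $\overline S/\overline\psi^n(\overline{\mathfrak q})\overline S$---so Proposition~\ref{Propos:5}(c), applied to this homomorphism with $R_n$ regarded as a module over itself, yields
\[
\length_S\bigl(S/\psi^n(\mathfrak q)S\bigr)=\length_R\bigl(R/\varphi^n(\mathfrak m)R\bigr)\cdot\length_{\overline S}\bigl(\overline S/\overline\psi^n(\overline{\mathfrak q})\overline S\bigr)\qquad\text{for every }n\ge 1.
\]
Taking logarithms, dividing by $n$ and letting $n\to\infty$ then turns this into $h_{\mathrm{loc}}(\psi)=h_{\mathrm{loc}}(\varphi)+h_{\mathrm{loc}}(\overline\psi)$, which is Equation~\ref{Equat:0}.

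I expect the flatness-propagation step to be the main obstacle, and it is exactly what forces the Cohen--Macaulay hypothesis: there is no reason a priori for $S/\psi^n(\mathfrak q)S$ to be flat over $R_n$. What makes it work is that $\psi^n$ sends the parameter ideal $\overline{\mathfrak q}$ of $\overline S$ to \emph{another} parameter ideal $\overline\psi^n(\overline{\mathfrak q})\overline S$, and in a Cohen--Macaulay ring the natural generators of a parameter ideal form a regular sequence---precisely the input required by the Flatness Criterion, and hence precisely what makes Proposition~\ref{Propos:5}(c) applicable with an exact multiplicativity of lengths. Outside the Cohen--Macaulay setting this regularity, and with it the flatness and the clean length identity, can fail, and one is then left only with the inequality $h_{\mathrm{loc}}(\psi)\le h_{\mathrm{loc}}(\varphi)+h_{\mathrm{loc}}(\overline\psi)$ of the preceding Proposition.
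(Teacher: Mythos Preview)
Your proof is correct and follows essentially the same route as the paper: use Cohen--Macaulayness of the closed fiber to upgrade the pushed-forward parameter system $\overline\psi^n(\overline y_1),\dots,\overline\psi^n(\overline y_e)$ to a regular sequence, feed this into the Flatness Criterion to get a flat quotient, and then apply Proposition~\ref{Propos:5}(c) for an exact length identity whose growth rate gives Equation~\ref{Equat:0}. The only cosmetic differences are that the paper works over $R$ throughout (showing $S/\psi^n(\mathfrak q')S$ is $R$-flat and then checking the composite $R\to S/\psi^n(\mathfrak q')S$ has finite length via a dimension count) and uses a parameter ideal $\mathfrak q\subset R$ rather than $\mathfrak m$ itself, whereas you first base-change to the Artinian ring $R_n$ and apply the criterion to $R_n\to S_n$; both arrangements lead to the same length identity and the same limit.
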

\begin{proof}
Since $f$ is flat, the Cohen-Macaulayness of $S$ implies that the rings $R$ and $S/f(\mathfrak{m})S$ are also Cohen-Macaulay (see, e.g.,~\cite[Corollary to Theorem~23.3]{Matsumura2}). Since $S/f(\mathfrak{m})S$ is Cohen-Macaulay, there exists a (non-unique) sequence of elements $y_1,\ldots,y_{d^\prime}\in\mathfrak{n}$ of length $d^\prime=\dim (S/f(\mathfrak{m})S)$, whose images in $S/f(\mathfrak{m})S$ form an $(S/f(\mathfrak{m})S)$-regular sequence. Note that by the Flatness Criterion mentioned earlier, $y_1,\ldots,y_{d^\prime}$ is an $S$-regular sequence. Let $\mathfrak{q}^\prime\subset S$ be the ideal generated by $y_1,\ldots,y_{d^\prime}$. We claim that for any integer $n\geq0$, the ring $S/\psi^n(\mathfrak{q}^\prime)S$ is flat over $R$ via the composition of maps
\begin{equation}
\label{Equat:1}
R\stackrel{f}{\rightarrow}S\rightarrow S/\psi^n(\mathfrak{q}^\prime)S.
\end{equation}
Since $R\stackrel{f}{\rightarrow}S$ is flat, the claim will be established by the Flatness Criterion, if we can show that the images of $\psi^n(y_1),\ldots,\psi^n(y_{d^\prime})$ in $S/f(\mathfrak{m})S$ form an $(S/f(\mathfrak{m})S)$-regular sequence. These images coincide with elements $$\overline{\psi}^n(\overline{y}_1),\ldots,\overline{\psi}^n(\overline{y}_{d^\prime}),$$ where $\overline{y}_i$ is the image of $y_i$ in $S/f(\mathfrak{m})S$. That $\overline{\psi}^n(\overline{y}_1),\ldots,\overline{\psi}^n(\overline{y}_{d^\prime})$ is an $(S/f(\mathfrak{m})S)$-regular sequence is an immediate consequence of Remark~\ref{Remark:1}, the fact that $\overline{y}_1,\ldots,\overline{y}_{d^\prime}$ is a maximal $(S/f(\mathfrak{m})S)$-regular sequence, and the fact that $\overline{\psi}^n$ is an endomorphism of finite length of $S/f(\mathfrak{m})S$ (hence, the image under $\overline{\psi}^n$ of any system of parameters is again a system of parameters in $S/f(\mathfrak{m})S$).\par
Now let  $x_1,\ldots,x_d\in\mathfrak{m}$ be an $R$-regular sequence of length $d=\dim R$ and let $\mathfrak{q}\subset R$ be the ideal generated by $x_1,\ldots,x_d$. By Remark~\ref{Remark:1}, $\mathfrak{q}$ is generated by a system of parameters in $R$. By the flatness of $S/\mathfrak{q}^\prime S$ over $R$ via the composition of maps shown in~\ref{Equat:1} (taking $n=0$), the images of $f(x_1),\ldots,f(x_d)$ in $S/\mathfrak{q}^\prime S$ form an $(S/\mathfrak{q}^\prime S)$-regular sequence. This means $y_1,\ldots,y_{d^\prime},f(x_1),\ldots,f(x_d)$ is an $S$-regular sequence. Moreover, since $f$ is flat, $$d+d^\prime=\dim R+\dim (S/f(\mathfrak{m})S)=\dim S$$ (see, e.g.,~\cite[Theorem~15.1]{Matsumura2}). Hence, $\{y_1,\ldots,y_{d^\prime},f(x_1),\ldots,f(x_d)\}$ is a system of parameters in $S$, by Remark~\ref{Remark:1}. Let $\mathfrak{Q}\subset S$ be the ideal generated by $$y_1,\ldots,y_{d^\prime},f(x_1),\ldots,f(x_d).$$ We note that for any integer $n\geq0$
\begin{equation}
\label{Equat:2}
\frac{R}{\varphi^n(\mathfrak{q})R}\otimes_R\frac{S}{\psi^n(\mathfrak{q}^\prime)S}\cong\frac{S}{f(\varphi^n(\mathfrak{q}))S+\psi^n(\mathfrak{q}^\prime)S}\cong\frac{S}{\psi^n(\mathfrak{Q})S},
\end{equation}
where the last isomorphism quickly follows from the fact that $\psi\circ f=f\circ\varphi$. Since $S/\psi^n(\mathfrak{q}^\prime)S$ is flat over $R$ and 
$$\dim (S/\psi^n(\mathfrak{q}^\prime)S)=\dim S-d^\prime=\dim S-\dim (S/f(\mathfrak{m})S)=\dim R,$$
the homomorphism $R\rightarrow S/\psi^n(\mathfrak{q}^\prime)S$ obtained by composing the maps given in~\ref{Equat:1} is in fact, of finite length. Hence, Proposition~\ref{Propos:5}-c) applies and from~\ref{Equat:2} we obtain
\begin{eqnarray*}
\length_S\left(S/\psi^n(\mathfrak{Q})S\right)&=&\length_S\big(\frac{R}{\varphi^n(\mathfrak{q})R}\otimes_R\frac{S}{\psi^n(\mathfrak{q}^\prime)S}\big)\\
&=&\length_R\left(R/\varphi^n(\mathfrak{q})R\right)\cdot\length_S\left(S/[f(\mathfrak{m})S+\psi^n(\mathfrak{q}^\prime)S]\right).
\end{eqnarray*}
After applying logarithm to both sides, dividing by $n$ and taking limits as $n\rightarrow\infty$, we obtain the desired Equation~\ref{Equat:0}, by Lemma~\ref{Lemma:1}.
\end{proof}
\begin{example}
\label{Example:1}
In this example we will apply Theorem~\ref{Theorem:1} to calculate local entropy of a specific endomorphism. Consider the endomorphism of the ring $(\mathbb{Z}/2\mathbb{Z})\llbracket x,y,w,s\rrbracket$ that maps $x, y, w $ and $s$ to $x^3+s^3, y^3, w^5+x^2$ and $xs^2$, respectively. This endomorphism is of finite length, because if $\mathfrak{p}$ is a minimal prime ideal of $(x^3+s^3, y^3, w^5+x^2,xs^2)$, then as one can quickly see, $\mathfrak{p}=(x,y,w,s)$. One can also verify quickly that the ideal $(s^6,y^3+x^2)$ is stable under this endomorphism. Thus, we obtain an induced ring endomorphism:
$$\psi\colon\frac{(\mathbb{Z}/2\mathbb{Z})\llbracket x,y,w,s\rrbracket}{(s^6,y^3+x^2)}\rightarrow\frac{(\mathbb{Z}/2\mathbb{Z})\llbracket x,y,w,s\rrbracket}{(s^6,y^3+x^2)}.$$ 
To abbreviate notation we will write $S$ for the ring $(\mathbb{Z}/2\mathbb{Z})\llbracket x,y,w,s\rrbracket/(s^6,y^3+x^2)$. Our goal in this example is to calculate $h_{\mathrm{loc}}(\psi)$, the local entropy of $\psi$. We will do this by constructing a flat homomorphism into the ring $S$ and then using Theorem~\ref{Theorem:1}. Note that $S$ is Cohen-Macaulay by virtue of being a complete intersection. Let $R=(\mathbb{Z}/2\mathbb{Z})\llbracket y\rrbracket$ and let $\varphi\colon R\rightarrow R$ be the endomorphism that maps $y$ to $y^3$. To define a homomorphism $f\colon R\rightarrow S$ set $f(y)=y$ and then extend it linearly to all of $R$. It is evident that $f\circ\varphi=\psi\circ f$. From the Flatness Criterion that was stated earlier, it quickly follows that $f$ is flat. Hence, by Theorem~\ref{Theorem:1}
\begin{eqnarray*}
h_{\mathrm{loc}}(\psi)&=&h_{\mathrm{loc}}(\varphi)+h_{\mathrm{loc}}(\overline{\psi})\\
&=&\log3+h_{\mathrm{loc}}(\overline{\psi}),
\end{eqnarray*}
where as usual $\overline{\psi}$ is the endomorphism induced by $\psi$ on $S/yS$. (That $h_{\mathrm{loc}}(\varphi)=\log3$ can be calculated quickly, using the definition of local entropy.) The ring $S/yS$ is isomorphic to $S^\prime:=(\mathbb{Z}/2\mathbb{Z})\llbracket x,w,s\rrbracket/(s^6,x^2)$ and $\overline{\psi}$ maps $x, w$ and $s$ to $s^3, w^5$ and $xs^2$, respectively.  To calculate $h_{\mathrm{loc}}(\overline{\psi})$, we construct another flat homomorphism, this time into $S^\prime$. Let $R^\prime:=(\mathbb{Z}/2\mathbb{Z})\llbracket w\rrbracket$ and let $\varphi^\prime\colon R^\prime\rightarrow R^\prime$ be the endomorphism that maps $w$ to $w^5$. To define a homomorphism $f^\prime\colon R^\prime\rightarrow S^\prime$ set $f^\prime(w)=w$ and then extend it linearly to all of $R^\prime$. Again it is evident that $f\circ\varphi=\psi\circ f$ and the flatness of $f^\prime$ quickly follows from the Flatness Criterion that was stated earlier. By Theorem~\ref{Theorem:1}, and using the fact that the local entropy of an endomorphism of a zero-dimensional local ring is zero (\cite[Corollary~16]{MajMiaSzp}), we obtain $h_{\mathrm{loc}}(\overline{\psi})=\log5$. Hence, $h_{\mathrm{loc}}(\psi)=\log3+\log5$.
\end{example}

\end{document}